\newcounter{observation}[section]
\newtheorem{obs}[observation]{Observation}
\newcommand{\fmod}{\ensuremath{\mathrm{mod}}}
\begin{document}
\title{A $p$-centered coloring for the grid using $O(p)$ colors}
\author{Mathew Francis\inst{1} \and Drimit Pattanayak\inst{2}}
\institute{Indian Statistical Institute, Chennai Centre. Email: \texttt{mathew@isichennai.res.in} \and Indian Statistical Institute, Kolkata. Email: \texttt{drimitpattanayak@gmail.com}}
\maketitle
\pagestyle{plain}
\begin{abstract}
A $p$-centered coloring of a graph $G$, where $p$ is a positive integer, is a coloring of the vertices of $G$ in such a way that every connected subgraph of $G$ either contains a vertex with a unique color or contains more than $p$ different colors. 
We give an explicit construction of a $p$-centered coloring using $O(p)$ colors for the planar grid. 
%This answers a question of D\k{e}bski, Felsner, Micek and Schr\"oder.
\end{abstract}
\section{Introduction}

We denote the vertex set and edge set of a graph $G$ by $V(G)$ and $E(G)$ respectively.
For a graph $G$ and a set $S \subseteq V(G)$ we denote by $G[S]$ the subgraph induced in $G$ by $S$, i.e. $V(G[S])=S$ and $E(G[S])=\{uv: u,v \in S, \,\, uv\in E(G)\}$.
Please refer to~\cite{diestel} for any term that is not defined here.

For a graph $G$, a \emph{coloring} of the vertices of $G$ is a mapping $\phi:V(G)\rightarrow Q$, where $Q$ is any set. Given such a coloring $\phi$, we say that the set $Q$ is the set of \emph{colors} used by $\phi$ and that a vertex $u\in V(G)$ has the color $\phi(u)$. Given a graph $G$ and a positive integer $p$, a \emph{$p$-centered coloring} of $G$ is a coloring of the vertices of $G$ using a set of colors $Q$ such that for every connected subgraph $H$ of $G$, either there exists $i\in Q$ such that $|\phi^{-1}(i)\cap V(H)|=1$ or $|\phi(V(H))|>p$, or in other words, every connected subgraph of $G$ either contains a vertex with a unique color or contains more than $p$ colors.
\medskip

The notion of $p$-centered colorings of graphs was introduced by Ne\v{s}et\v{r}il and Ossona de Mendez~\cite{NesPOMtreedepth} in the course of their development of the theory of ``structurally sparse'' graph classes. In 2004, DeVos et al.~\cite{DeVosetal} showed that every proper minor closed class of graphs has a \emph{low treewidth coloring}: there exists a function $f:\mathbb{N}\rightarrow\mathbb{N}$ such that the vertex set of any graph in the class can be partitioned into $f(p)$ parts in such a way that for every $i\leq p$, the union of any $i$ parts induces a subgraph of treewidth at most $i-1$. Ne\v{s}et\v{r}il and Ossona de Mendez~\cite{NesPOMtreedepth} introduced the \emph{treedepth} of a graph and showed that it is exactly the minimum number of colors needed in a \emph{centered coloring} of the graph: a coloring of the vertices such that every connected subgraph contains a vertex having a unique color (this parameter also turns out to be equal to some other graph parameters that were known in the literature; see~\cite{NesPOMtreedepth} for details). It is not difficult to see that a $p$-centered coloring of a graph $G$ is a coloring of $V(G)$ such that for every $i\leq p$, the union of $i$ color classes induces a subgraph of $G$ whose each connected component has a centered coloring using at most $i$ colors. Thus, a $p$-centered coloring can be said to be a ``low treedepth coloring''. Since the treedepth of a graph is always at least one more than its treewidth~\cite{Bodlaenderetal} (in fact, one more than its pathwidth) and the treewidth of any graph is equal to the maximum of the treewidth of its connected components, low treedepth colorings are a generalization of low treewidth colorings. The following generalization of the result of DeVos et al. was shown in~\cite{NesPOMtreedepth}: for every proper minor closed class of graphs, there exists a function $f:\mathbb{N}\rightarrow\mathbb{N}$ such that for every integer $p\geq 1$, every graph in the class has a $p$-centered coloring using at most $f(p)$ colors. In a landmark paper, Ne\v{s}et\v{r}il and Ossona de Mendez~\cite{NesPOMexpansion} improved this further and showed that the classes of graphs having low treedepth colorings, the classes of graphs having low treewidth colorings, the classes of graphs for which there exists a function $f:\mathbb{N}\rightarrow\mathbb{N}$ such that every graph in the class has a $p$-centered coloring using at most $f(p)$ colors for every $p\geq 1$, are all the same and characterized these classes as the classes of graphs with \emph{bounded expansion} (see~\cite{NesPOMexpansion} for details).

It was first shown by Pilipczuk and Siebertz~\cite{PiliSieb} that for every proper minor closed family of graphs, there exists a \emph{polynomial} function $f:\mathbb{N}\rightarrow\mathbb{N}$ such that for every positive integer $p$, every graph in the class has a $p$-centered coloring using at most $f(p)$ colors --- or in other words, proper minor closed families of graphs ``admit polynomial centered colorings''.
D\k{e}bski, Felsner, Micek and Schr\"oder~\cite{DFMS} combined different techniques to obtain several path-breaking results on $p$-centered colorings, including substantial improvements to and tightening of some bounds in~\cite{PiliSieb}. Perhaps the most surprising result in their work is their proof using entropy compression that graphs of bounded degree have $p$-centered colorings using just $O(p)$ colors: they showed that for any positive integer $p$, every graph $G$ having maximum degree $\Delta$ has a $p$-centered coloring using $O(\Delta^{2-\frac{1}{p}}p)$ colors. However, their technique does not provide an explicit construction of a $p$-centered coloring using $O(p)$ colors for graphs of bounded maximum degree, and they remark that an explicit construction of such a coloring is not known even for the planar grid. In this paper, we give a simple and direct construction of a $p$-centered coloring of the planar grid using $O(p)$ colors.

\subsection{Definitions and Notation}
Given a graph $G$ and a coloring $\phi$ of it, a ``violator'' in $G$ with respect to $\phi$ is a connected subgraph that contains at most $p$ colors and also does not contain any vertex with a unique color. Thus, a coloring of a graph is a $p$-centered coloring for it if and only if there is no violator in the graph with respect to that coloring.

For two positive integers $a$ and $b$, we define $\fmod(a,b)=a-b\left\lfloor \frac{a}{b} \right\rfloor$. Also, we write $b\mid a$ to mean ``$b$ divides $a$'', or in other words $\fmod(a,b)=0$, and we write $b\nmid a$ to mean ``$b$ does not divide $a$''. For two integers $a$ and $b$ such that $a\leq b$, we denote by $[a,b]$ the set $\{n\in\mathbb{N}\colon a\leq n\leq b\}$.
We abbreviate $\log_2$ as $\lg$.

For the purposes of this paper, we shall use the term \emph{grid} to denote the (infinite) graph $G$ having $V(G)=\mathbb{N}^2$ and $E(G)=\{(i,j)(i',j'):i,j,i',j' \in \mathbb{N}$ and $|i-i'|+|j-j'|=1\}$. We shall use $G$ to denote the grid graph. As planar grids of all sizes occur as subgraphs of $G$, it suffices to demonstrate a $p$-centered coloring for $G$ using $O(p)$ colors.
%A $k\times l$ subgrid of grid $G$ is an induced subgraph $G[[k]\times [l]]$ $($ where $[n]=\{1,2,\cdots,n\})$. 

For a vertex $(x,y)\in V(G)$, we define $\pi_x(x,y)=x$ and $\pi_y(x,y)=y$.
A set $S\subseteq V(G)$ is said to be \emph{row contiguous} if  $\pi_x(S)=[\min\pi_x(S),\max\pi_x(S)]$ and \emph{column contiguous} if $\pi_y(S)=[\min\pi_y(S),\max\pi_y(S)]$.
In other words, a set $S$ is row contiguous if and only if for $x,a,x'\in\mathbb{N}$ such that $x<a<x'$, we have $x,x'\in\pi_x(S)\Rightarrow a\in\pi_x(S)$, and a set $S$ is column contiguous if and only if for $y,b,y'\in\mathbb{N}$ such that $y<b<y'$, we have $y,y'\in\pi_y(S)\Rightarrow b\in\pi_y(S)$.
We say that a row contiguous set $S$ \emph{spans $|\pi_x(S)|$ rows} and that a column contiguous set $S$ \emph{spans $|\pi_y(S)|$ columns}. Note that if $H$ is any connected subgraph of $G$, then $V(H)$ is both row and column contiguous.
%Note that $S$ is a row contiguous set if and only if for each pair of vertices $(x,y),(x',y')\in S$, where $x\leq x'$,  and $a\in [x,x']$, there exists $b\in\mathbb{N}$ such that $(a,b)\in S$. Similarly, $S$ is a column contiguous set if and only if for each pair of vertices $(x,y),(x',y')\in S$, where $y\leq y'$, and $b\in [y,y']$, there exists $a\in\mathbb{N}$ such that $(a,b)\in S$.

\section{The coloring}
Our proof strategy is more or less as follows:
we first show a coloring of the grid in which every violator is \emph{large}---i.e. every violator spans either ``many'' rows or ``many'' columns, and
then show another coloring of the grid, in which every violator is \emph{small}, or in other words, no violator spans ``many'' rows or columns.
Our final $p$-centered coloring of the grid is constructed by ``mixing'' or ``interleaving'' these two colorings in a special way. The accurate meanings of the words in quotes shall become clear when we give the formal definitions of the colorings.

We show the construction for the case when $p$ is a power of 2 that is greater than 1, or in other words, $p=2^i$, where $i>0$.
\subsection{A coloring in which every violator is large}\label{sec:mu}
We describe a coloring $\mu$ of the grid $G$ that contains no ``small'' violators; more accurately, the coloring will have the property that any violator in $G$ with respect to this coloring spans either more than $4p$ rows or more than $4p$ columns. We actually show a stronger property: any row and column contiguous subset of $V(G)$ contains a uniquely colored vertex or spans either more than $4p$ rows or more than $4p$ columns.
\medskip

\noindent Define the function $f:\mathbb{N}\rightarrow \mathbb{N}$ as $n\mapsto \max\{i\in[0,\lg(4p)]: 2^i\mid n\}$.
\medskip

%Let $A=\{(x,y)\in \mathbb{N}^2:f(x)< f(y)\}$.
%The indicator function $1_A:\mathbb{N}^2\rightarrow \{0,1\}$ is defined as $$1_A(x)=\left\{\begin{array}{l@{\hspace{0.2in}}l}1&\mbox{if }x\in A,\\0&\mbox{otherwise.}\end{array}\right.$$
\noindent We further define $\alpha:\mathbb{N}^2\rightarrow\{0,1\}$, $l:\mathbb{N}^2\rightarrow [0,\lg(4p)]$, and  $\rho:\mathbb{N}^2\rightarrow \mathbb{N}$ as follows.
Let $(x,y)\in\mathbb{N}^2$.
\medskip

If $f(x)\geq f(y)$, then we define $\alpha(x,y)=0$, $l(x,y)=f(x)$, and $\rho(x,y)=\fmod(y,2^{f(x)+1})$.

If $f(x)<f(y)$, then we define $\alpha(x,y)=1$, $l(x,y)=f(y)$, and $\rho(x,y)=\fmod(x,2^{f(y)+1})$.
\medskip

\noindent Notice that for any $(x,y)\in\mathbb{N}^2$, we have $l(x,y)=\max\{f(x),f(y)\}$.
%$$\alpha(x,y)=\left\{\begin{array}{ll}0&\mbox{if }f(x)\geq f(y)\\1&\mbox{otherwise}\end{array}\right.\qquad
%l(x,y)=\left\{\begin{array}{ll}f(x)&\mbox{if } f(x)\geq f(y)\\f(y)&\mbox{otherwise}\end{array}\right.\qquad
%\rho(x,y)=\left\{\begin{array}{ll}\fmod(y,2^{f(x)+1})&\mbox{if }f(x)\geq f(y)\\\fmod(x,2^{f(y)+1})&\mbox{otherwise}\end{array}\right.$$
\medskip

%Next, we define $l:\mathbb{N}^2\rightarrow [0,\lg(4p)]$ as $(x,y)\mapsto \max\{f(x),f(y)\}=1_A(x,y)f(y)+(1-1_A(x,y))f(x)$.

%We then define $\rho:\mathbb{N}^2\rightarrow \mathbb{N}$ as $(x,y)\mapsto mod(x1_A(x,y)+y(1-1_A(x,y)),2^{l(x,y)+1})$.

\noindent Finally, we define $\mu:\mathbb{N}^2\rightarrow\{0,1\}\times [0,\lg(4p)]\times\mathbb{N}$ as $(x,y)\mapsto(\alpha(x,y),l(x,y),\rho(x,y))$.
We consider $\mu$ to be a coloring of $V(G)$.
\medskip

\noindent Observe that for any $(x,y)\in V(G)$, we have $\rho(x,y)\in [0,2^{l(x,y)+1}-1]$. This immediately implies that $|\mu(V(G))|\leq 2\sum_{i=0}^{\lg(4p)} 2^{i+1}\leq 32p$.\footnote[1]{For a more precise calculation of $|\mu(V(G))|$, note that we can have $\rho(x,y)=0$ only if $l(x,y)=\lg(4p)$ and $\alpha(x,y)=0$. Further, we cannot have $\alpha(x,y)=1$ and $\rho(x,y)=2^{f(y)}$. This can be used to deduce that $$|\mu(V(G))|=2\left(\sum\limits_{i\in [0,\lg(4p)]}(2^{i+1}-1)\right)+1-(\lg(4p)+1)=32p-3\lg(4p)-6=32p-3\lg(p)-12$$}
Figure~\ref{fig:mu} shows how the coloring $\mu$ looks like near the corner $(0,0)$ of the grid.

\begin{figure}
	\begin{center}
		\includegraphics{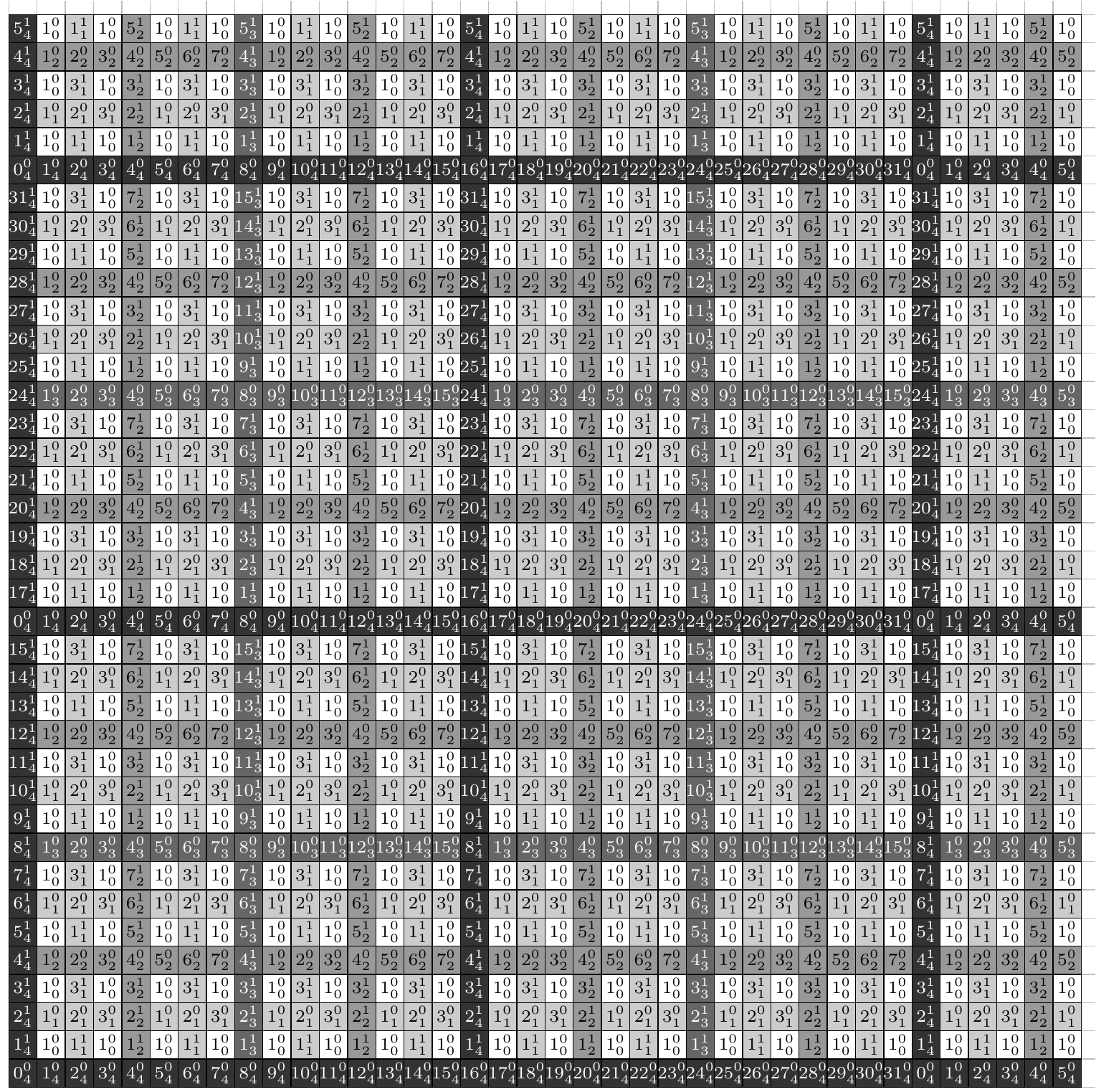}
	\end{center}
	\caption{The coloring $\mu$ of the grid for $p=4$. The color $\mu(i,j)$, in the form $\rho(i,j)_{l(i,j)}^{\alpha(i,j)}$, is written in the position $(i,j)$. The cell in the lower left corner corresponds to the vertex $(0,0)$. The higher the value of $l(i,j)$, the darker is the shade of the cell corresponding to the vertex $(i,j)$.}\label{fig:mu}
\end{figure}

\begin{lemma}\label{small}
Let $S\subseteq V(G)$. If $S$ is both row and column contiguous, and does not contain any uniquely colored vertex with respect to the coloring $\mu$ of $G$, then either $S$ spans more than $4p$ rows or $S$ spans more than $4p$ columns.
\end{lemma}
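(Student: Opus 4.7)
The plan is to prove the contrapositive: I assume $S$ is row and column contiguous with $|\pi_x(S)| \leq 4p$ and $|\pi_y(S)| \leq 4p$, and exhibit a uniquely colored vertex in $S$.

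The key preliminary is a number-theoretic observation about $f$ on short ranges: for any set $I$ of at most $4p$ consecutive elements of $\mathbb{N}$, the function $f$ attains its maximum on $I$ at a unique element, and if that maximum value $k$ satisfies $k < \lg(4p)$ then $I$ contains no multiple of $2^{k+1}$, so $|I| \leq 2^{k+1} - 1$. The reason is short: any two distinct elements with $f$-value $k < \lg(4p)$ are odd multiples of $2^k$ and so differ by at least $2^{k+1}$, while a range of $\leq 4p$ consecutive integers contains at most one multiple of $4p$, which handles the capped case. Applying this to $\pi_x(S)$ and $\pi_y(S)$ produces unique maximizers $x^* \in \pi_x(S)$ of $f$-value $k_x$ and $y^* \in \pi_y(S)$ of $f$-value $k_y$; by symmetry I may assume $k_x \geq k_y$.

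Now I focus on $T_x = S \cap (\{x^*\} \times \mathbb{N})$, the vertices of $S$ lying in column $x^*$; this set is nonempty because $x^* \in \pi_x(S)$. Every vertex $(x^*, y) \in T_x$ satisfies $f(x^*) = k_x \geq k_y \geq f(y)$, so its color is $(0, k_x, \fmod(y, 2^{k_x+1}))$. No vertex of $S$ outside column $x^*$ carries such a color, because any $(x', y') \in S$ with $\alpha(x', y') = 0$ and $l(x', y') = k_x$ would require $f(x') = k_x$ with $x' \in \pi_x(S)$, forcing $x' = x^*$ by the uniqueness of the maximum. Finally, any two distinct $y, y' \in \pi_y(S)$ give distinct residues modulo $2^{k_x+1}$: if $k_x = \lg(4p)$, this is because $|\pi_y(S)| \leq 4p < 8p = 2^{k_x+1}$; if $k_x < \lg(4p)$, then $k_y \leq k_x < \lg(4p)$, so the observation applied to $\pi_y(S)$ yields $|\pi_y(S)| \leq 2^{k_y+1} - 1 \leq 2^{k_x+1} - 1$. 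Combining these three facts, every vertex of $T_x$ has a color that is unique in $S$, as desired.

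The main obstacle is the case $k_x < \lg(4p)$, where the coarse bound $|\pi_y(S)| \leq 4p$ is not small enough relative to $2^{k_x+1}$; the sharper size bound $|I| \leq 2^{k+1} - 1$ from the number-theoretic observation is precisely what makes the argument go through in that regime.
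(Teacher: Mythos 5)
Your proof is correct. The arithmetic core is the same as the paper's: two integers with the same sub-maximal $f$-value are odd multiples of $2^k$, so any interval containing both also contains a multiple of $2^{k+1}$, which has strictly larger $f$-value; the capped level $\lg(4p)$ is handled by the fact that an interval of length at most $4p$ meets at most one multiple of $4p$. The organization, however, is genuinely different. The paper argues directly: it picks a vertex of $S$ maximizing $l$, takes a color twin guaranteed by the hypothesis, and splits into cases according to whether the twin shares its first coordinate, each case yielding either a higher-level vertex (contradicting maximality) or a span exceeding $4p$. You prove the contrapositive, factoring the arithmetic into a standalone observation about the unique maximizer of $f$ on an interval of length at most $4p$, and then exhibit an explicit nonempty set of uniquely colored vertices, namely all of $S$ in the column of the unique maximizer $x^*$ (when $k_x\geq k_y$). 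Your route is somewhat more informative --- it locates the uniquely colored vertices rather than merely proving one exists --- and replaces the paper's case analysis by two clean claims: the pair $(\alpha,l)=(0,k_x)$ pins a vertex to column $x^*$, and residues modulo $2^{k_x+1}$ separate the vertices within that column; the sharper bound $|\pi_y(S)|\leq 2^{k_y+1}-1$ in the uncapped case is exactly the extra ingredient that makes the second claim work. The paper's version, in exchange, needs no separately stated interval lemma. One point worth making explicit in a polished write-up: the ``by symmetry'' reduction is legitimate, but the tie $k_x=k_y$ must be taken on the $\alpha=0$ side (as you do), since that is the tie-breaking convention in the definition of $\mu$; in the mirrored case $k_y>k_x$ the inequality $f(x)<f(y^*)$ is strict for every $x\in\pi_x(S)$, which is what forces $\alpha=1$ there.
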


\begin{proof}
Suppose that $S$ is both row and column contiguous and does not contain any uniquely colored vertex with respect to the coloring $\mu$. Consider a vertex $(x,y)\in S$ such that $l(x,y)=\max\{l(a,b)\colon (a,b)\in S\}$. Since there is no uniquely colored vertex in $S$, there exists $(x',y')\in S$ such that $(x',y')\neq (x,y)$ and $\mu(x,y)=\mu(x',y')$. By definition of $\mu$, we have $\alpha(x,y)=\alpha(x',y')$, $l(x,y)=l(x',y')$, and $\rho(x,y)=\rho(x',y')$. We assume that $\alpha(x,y)=\alpha(x',y')=0$, since the other case, i.e. $\alpha(x,y)=\alpha(x',y')=1$, is symmetric. Then $f(x)\geq f(y)$ and $f(x') \geq f(y')$, from which it follows that $l(x,y)=f(x)$ and $l(x',y')=f(x')$. As $l(x,y)=l(x',y')$, we now have $f(x)=f(x')=t$ (say).

We first consider the case when $t<\lg(4p)$.
Suppose that $x\neq x'$. We shall assume without loss of generality that $x<x'$. Since $f(x)=t<\lg(4p)$, we have that $2^t\mid x$ and that $2^{t+1}\nmid x$. This implies that $2^{t+1}\mid x+2^t$. As $2^t\mid x$ and $2^t\mid x'$, we have $x'\geq x+2^t$. Since $f(x')=t<\lg(4p)$, we have $2^{t+1}\nmid x' $, from which it follows that $x'>x+2^t$. As $S$ is row contiguous, there exists $b\in\mathbb{N}$ such that $(x+2^t,b)\in S$. As $t<\lg(4p)$ and $2^{t+1}\mid x+2^t$, we have that $f(x+2^t)\geq t+1$. But now we have $l(x+2^t,b)=\max\{f(x+2^t),f(b)\}\geq t+1>l(x,y)$, which is a contradiction to our choice of $(x,y)$. So we can assume that $x=x'$. As $(x,y)\neq (x',y')$, we have $y\neq y'$. We assume without loss of generality that $y<y'$. As $\rho(x,y)=\rho(x',y')$, we have $\fmod(y,2^{t+1})=\fmod(y',2^{t+1}) = r$ (say). This implies that $y'\geq y+2^{t+1}$. Since $r<2^{t+1}$, it follows that $y'-r>y$. As $t<\lg(4p)$ and $l(x,y)=\max\{f(x),f(y)\}=t$, we know that $2^{t+1}\nmid y$, or in other words, $r\neq 0$. Thus $y'-r<y'$. We now have $y<y'-r<y'$. As $S$ is column contiguous there exists $a \in \mathbb{N}$ such that $(a,y'-r)\in S$. Notice that $2^{t+1}\mid y'-r$, which means that $f(y'-r)\geq t+1$. Then $l(a,y'-r)=\max\{f(a),f(y'-r)\}\geq t+1>l(x,y)$. But this is again a contradiction to our choice of $(x,y)$.

From the above argument, we can conclude that $t=\lg(4p)$. Hence both $x$ and $x'$ are divisible by $4p$. If $x\neq x'$, then this means that $|x-x'|\geq 4p$, which implies that $S$ spans more than $4p$ rows, and we are done. So we shall assume that $x=x'$, which means that $y\neq y'$. Since $\rho(x,y)=\rho(x',y')$, we have that $\fmod(y,2^{t+1})=\fmod(y',2^{t+1})$, or in other words, $\fmod(y,8p)=\fmod(y',8p)$. Hence $|y-y'|\geq 8p$ (as $y\neq y'$), implying that $S$ spans more than $8p$ columns, and we are again done. This completes the proof.
\hfill\qed
\end{proof}

Notice that the coloring $\mu$ of $G$ is not a $p$-centered coloring of $G$ because of the presence of ``large'' violators. For example, consider the subgraph $H=G[\{(0,1),(1,1),(2,1),\ldots,(8p,1)\}]$. Clearly, for each vertex $u\in V(H)$, we have $\alpha(u)=0$, $\rho(u)=1$, and $l(u)\in [0,\lg(4p)]$, which implies that $|\mu(V(H))|\leq\lg(4p)+1$. So when $p\geq 8$, $|\mu(V(H))|<p+1$. At the same time, we have that for each $x\in [0,4p]$, the vertices $(x,1),(4p+x,1)\in V(H)$ and $\mu(x,1)=\mu(4p+x,1)$. Thus, no vertex in $H$ has a unique color. As $H$ is clearly a connected subgraph of $G$, we can conclude that $\mu$ is not a $p$-centered coloring of $G$ for any value of $p$ greater than or equal to  8.

\subsection{A coloring in which every violator is small}
We now show another coloring of the grid $G$ which is again not $p$-centered, but this time has no ``large violators''; in particular, it has the property that any violator in $G$ spans at most $2(p+1)$ rows and at most $2(p+1)$ columns. Note that we mention this coloring of the grid just to develop intuition; it is not directly used in the proof of the main result.

Let $R=\{(x,y)\in V(G)\colon x+y$ is even$\}$ and let $C=V(G)\setminus R$. Consider the coloring $\theta:\mathbb{N}^2\rightarrow\mathbb{N}$ of the vertices of $G$ that is defined as follows: $$\theta(x,y)=\left\{\begin{array}{l@{\hspace{.25in}}l}(0,\fmod(x,2p+2))&\mbox{if }(x,y)\in R\\(1,\fmod(y,2p+2))&\mbox{if }(x,y)\in C\end{array}\right.$$ Figure~\ref{fig:theta} shows how the coloring looks like for the case $p=3$ for a section of the grid. It is not difficult to prove that if $H$ is any connected subgraph of $G$ that spans at least $2(p+1)$ columns or at least $2(p+1)$ rows, then $|\theta(V(H))|>p$ (we do not prove this rigorously as it is not required for the proof of our main result). Note that $\theta$ is also not a $p$-centered coloring of $G$ as it contains ``small'' violators: for example, for any $p\geq 5$, the connected subgraph $G[\{(0,0),(0,1),(0,2),(1,1),(1,2),(1,3),(2,0),(2,1),(2,2),(3,2)\}]$ of $G$ is a violator.
%An outline of a proof for this is as follows. Consider any subgraph $H$ of $G$ that spans at least $2(p+1)$ columns. Let $y_1=\min\pi_y(V(H))$ and $y_2=\max\pi_y(V(H))$. Then there exists $x_1,x_2\in\mathbb{N}$ such that $(x_1,y_1),(x_2,y_2)\in V(H)$. Further, there exists a path $P$ in $H$ having endvertices $(x_1,y_1)$ and $(x_2,y_2)$. It is clear that for any $i\in [y_1,y_2-1]$, hen it contains a path $P$ that spans at least $2(p+1)$ columns. For any $i\

\begin{figure}
	\begin{center}
		\begin{tikzpicture}
			\foreach \x in {0,...,17} {
				\draw [lightgray] (0,\x*.5) -- (8.75,\x*.5);
				\draw [lightgray] (\x*.5,0) -- (\x*.5,8.75);}
			\foreach \x in {0,...,16}
			\foreach \y in {0,...,16} {
				\pgfmathtruncatemacro{\val}{mod(\x+\y,2)};
				\pgfmathtruncatemacro{\grayshade}{20*\val};
				\draw [color=lightgray, fill=black!\grayshade] (\y*.5,\x*.5) rectangle (\y*.5+.5,\x*.5+.5);
				\ifthenelse{\val > 0}{\pgfmathtruncatemacro{\valu}{mod(\y,8)}}{\pgfmathtruncatemacro{\valu}{mod(\x,8)}};
				\node at (\y*.5+.25,\x*.5+.25) {\valu};
			}
		\end{tikzpicture}
	\end{center}
	\caption{The coloring $\theta$ for a part of the grid for the case $p=3$ is shown above. The cell at the lower left corner corner corresponds to the vertex $(0,0)$. The cell corresponding to a vertex $(i,j)\in R$ having $\theta(i,j)=(0,t)$ is colored white and contains the integer $t$; similarly, the cell corresponding to a vertex $(i,j)\in C$ having $\theta(i,j)=(1,t)$ is colored gray and contains the integer $t$.}\label{fig:theta}
\end{figure}

\subsection{Obtaining a $p$-centered coloring}
We now describe a new coloring  $\lambda$ of the grid $G$. First, we partition $V(G)$ into two sets in two different ways. The vertex set $V(G)$ is partitioned into two sets $R,C$ as follows: define $R=\{(x,y):\left \lfloor\frac{x}{3}\right\rfloor +\left \lfloor \frac{y}{3} \right \rfloor$ is even$\}$ and $C=V(G)\setminus R$. Again, $V(G)$ is partitioned into two sets $A,B$ in the following way: define  $A=\{(x,y)\in V(G):\fmod(x,3)+\fmod(y,3)$ is odd$\}$ and $B=V(G)\setminus A$. See Figure~\ref{fig:partition} for an illustration of the two partitions of $V(G)$.
The mapping $I:V(G)\rightarrow V(G)$ is defined as $(x,y)\mapsto (\left\lfloor \frac{x}{3}\right\rfloor,\left\lfloor \frac{y}{3}\right\rfloor)$.
%; in informal terms, for any vertex $(x,y)\in V(G)$, the lower left corner of the $3\times 3$ subgrid of $G$ that $(x,y)$ belongs to.
Finally, we define the coloring $\lambda$ of $V(G)$ as the function given below:%\mathbb{N}^2\rightarrow (\{0,1\}\times [0,\lg(4p)]\times\mathbb{N}\times [0,3]\times [0,3])\cup (\{0,1\}\times [0,6p+5])$ as
$$\begin{array}{llll}
(x,y)&\mapsto&(\mu(I(x,y)),\fmod(x,3),\fmod(y,3))&\mbox{when }(x,y)\in A\\
(x,y)&\mapsto&(0,\fmod(x,6p+6))&\mbox{when }(x,y) \in R \cap B,\mbox{ and }\\
(x,y)&\mapsto&(1,\fmod(y,6p+6))&\mbox{when }(x,y)\in C\cap B
\end{array}$$
Notice that $I(V(G))=V(G)$ and therefore $|\mu(I(V(G)))|\leq 32p$ as observed in Section~\ref{sec:mu}. This implies that $|\lambda(V(G))|\leq 4(32p)+2(6p+6)=140p+12$.%\in O(p)$.

\begin{figure}
	\begin{center}
		\begin{tikzpicture}
			\foreach \x in {0,...,17} {
				\draw [lightgray] (0,\x*.5) -- (8.75,\x*.5);
				\draw [lightgray] (\x*.5,0) -- (\x*.5,8.75);}
			\foreach \x in {0,...,16}
			\foreach \y in {0,...,16} {
				\pgfmathtruncatemacro{\val}{mod(int(\x/3)+int(\y/3),2)};
				\pgfmathtruncatemacro{\grayshade}{20*\val};
				\draw [color=lightgray, fill=black!\grayshade] (\y*.5,\x*.5) rectangle (\y*.5+.5,\x*.5+.5);
				\pgfmathtruncatemacro{\pat}{mod(mod(\x,3)+mod(\y,3),2)};
				\ifthenelse{\pat>0}{\fill [pattern=north west lines] (\y*.5,\x*.5) rectangle (\y*.5+.5,\x*.5+.5)}{};
				%	\ifthenelse{\val > 0}{\pgfmathtruncatemacro{\valu}{mod(\y,8)}}{\pgfmathtruncatemacro{\valu}{mod(\x,8)}};
				%	\node at (\y*.5+.25,\x*.5+.25) {\val};
			}
		\end{tikzpicture}
	\end{center}
	\caption{The above figure shows the two partitions of $V(G)$. The cell at position $(i,j)$ is: (a) white if $(i,j)\in R$ and gray if $(i,j)\in C$, (b) hatched if $(i,j)\in A$ and not hatched if $(i,j)\in B$.}\label{fig:partition}
\end{figure}
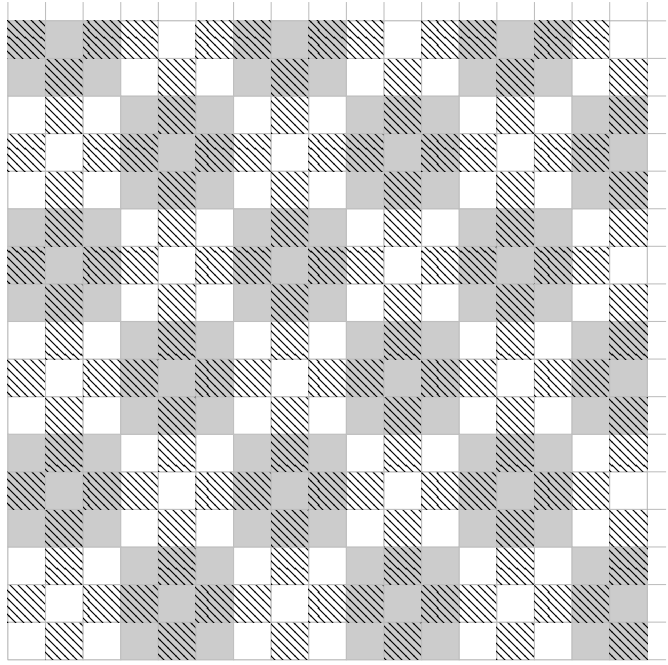

\begin{obs}\label{adjacent}
If $(x,y)(x',y')\in E(G)$ such that $I(x,y)\neq I(x',y')$,  then one of $(x,y),(x',y')$ is in $R$ and the other in $C$. 
\end{obs}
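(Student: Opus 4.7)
The plan is to use the definition of $I$ together with the fact that two adjacent vertices of $G$ differ by exactly one in exactly one coordinate, and then track what happens to the parity of $\lfloor x/3\rfloor+\lfloor y/3\rfloor$.

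First I would observe that if $(x,y)(x',y')\in E(G)$ then either $|x-x'|=1$ and $y=y'$, or $|y-y'|=1$ and $x=x'$. By the symmetry between the two coordinates in the definitions of $I$, $R$, and $C$, it suffices to handle one case; say $(x',y')=(x+1,y)$. Then the second coordinates of $I(x,y)$ and $I(x',y')$ agree, so the hypothesis $I(x,y)\neq I(x',y')$ forces $\lfloor x/3\rfloor \neq \lfloor (x+1)/3\rfloor$.

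Next I would note that since $\lfloor (x+1)/3\rfloor\in\{\lfloor x/3\rfloor,\lfloor x/3\rfloor+1\}$ (one can see this directly, or from the fact that the function $t\mapsto\lfloor t/3\rfloor$ is non-decreasing and increases by at most $1$ when $t$ increases by $1$), the inequality gives $\lfloor (x+1)/3\rfloor=\lfloor x/3\rfloor+1$. Consequently, $\lfloor x'/3\rfloor+\lfloor y'/3\rfloor = \lfloor x/3\rfloor+\lfloor y/3\rfloor + 1$, so these two sums have opposite parities. By the definitions of $R$ and $C$, this means exactly one of $(x,y),(x',y')$ lies in $R$, with the other in $C$, as required.

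No real obstacle is anticipated; the argument is a one-line parity check once the right reduction is made. The only thing to be careful about is justifying the implicit claim that consecutive values of the integer-division function $\lfloor\cdot/3\rfloor$ differ by $0$ or $1$, which is immediate but worth stating explicitly so the symmetric case (where the differing coordinate is $y$) goes through without any further work.
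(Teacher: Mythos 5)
Your proof is correct and follows essentially the same route as the paper's: reduce to the case where one coordinate changes by $1$, deduce from $I(x,y)\neq I(x',y')$ that the corresponding value of $\lfloor\cdot/3\rfloor$ changes by exactly $1$, and conclude that the parity of $\lfloor x/3\rfloor+\lfloor y/3\rfloor$ flips. The only cosmetic difference is that you fix the changing coordinate to be $x$ with $x'=x+1$, whereas the paper treats the $y$-coordinate case with $|y-y'|=1$; both are handled by the same symmetry.
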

\begin{proof}
Since $(x,y)(x',y')\in E(G)$, we have $|x-x'|+|y-y'|=1$. This means that either $x=x'$ or $y=y'$. We shall only consider the case when $x=x'$ as the other case is symmetric. Note that we now have that $|y-y'|=1$. As $\Big\lfloor\frac{x}{3}\Big\rfloor=\left\lfloor\frac{x'}{3}\right\rfloor$, the fact that $I(x,y)\neq I(x',y')$ implies that $\Big\lfloor\frac{y}{3}\Big\rfloor\neq\left\lfloor\frac{y'}{3}\right\rfloor$. Since $|y-y'|=1$, we can conclude that $\Big|\Big\lfloor\frac{y}{3}\Big\rfloor-\left\lfloor\frac{y'}{3}\right\rfloor\Big|=1$. Then the parity of $\left\lfloor \frac{x'}{3} \right \rfloor+\left\lfloor \frac{y'}{3} \right \rfloor$ is different from that of $\left\lfloor \frac{x}{3} \right \rfloor+\left\lfloor \frac{y}{3} \right \rfloor$. Thus, we have that one of $(x,y),(x',y')$ is in $R$ and the other in $C$.\hfill\qed
\end{proof}
\begin{obs}\label{bridge}
Let $(x_1,y_1),(x_2,y_2)\in V(G)$ and let $P$ be a path in $G$ having endvertices $(x_1,y_1)$ and $(x_2,y_2)$. If $y_1<y_2$, then for every $b$ such that $y_1<b\leq y_2$, there exists an edge $(a,b-1)(a,b)\in E(P)$, for some $a\in\mathbb{N}$. Similarly, if $x_1<x_2$, then for every $a$ such that $x_1<a\leq x_2$, there exists an edge $(a-1,b)(a,b)\in E(P)$, for some $b\in\mathbb{N}$. 
\end{obs}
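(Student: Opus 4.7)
The plan is a discrete intermediate value theorem argument on the sequence of $y$-coordinates along $P$. Write the vertex sequence of $P$ as $v_0, v_1, \ldots, v_k$ with $v_0=(x_1,y_1)$ and $v_k=(x_2,y_2)$, and let $w_i=\pi_y(v_i)$. Because every edge of the grid changes exactly one coordinate by exactly one, $|w_{i+1}-w_i|\leq 1$ for each $i$, and moreover whenever $w_{i+1}\neq w_i$ the edge $v_iv_{i+1}$ must be ``vertical'', i.e.\ of the form $(a,w_i)(a,w_{i+1})$ for some $a\in\mathbb{N}$ with $\pi_x(v_i)=\pi_x(v_{i+1})=a$.

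Now fix $b$ with $y_1<b\leq y_2$. Then $w_0=y_1<b$ and $w_k=y_2\geq b$, so the set $\{i\colon w_i\geq b\}$ is nonempty and does not contain $0$. Let $j$ be its minimum; then $j\geq 1$ with $w_{j-1}<b$ and $w_j\geq b$. Combined with $|w_j-w_{j-1}|\leq 1$, this forces $w_{j-1}=b-1$ and $w_j=b$. In particular $w_{j-1}\neq w_j$, so by the observation above the edge $v_{j-1}v_j$ is vertical, giving some $a\in\mathbb{N}$ with $v_{j-1}v_j=(a,b-1)(a,b)\in E(P)$, which is the desired edge.

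The second assertion, about an edge of the form $(a-1,b)(a,b)$ for every $a$ with $x_1<a\leq x_2$, follows by the identical argument with the roles of the two coordinates interchanged. There is no real obstacle: the only point requiring a moment's care is that grid edges are either horizontal or vertical, so a change in one coordinate along a single edge automatically implies that the other coordinate is preserved, which is what lets us extract the value of $a$ at the crossing step.
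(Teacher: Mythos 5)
Your proof is correct and takes essentially the same approach as the paper: both identify the first vertex along $P$ (traversed from $(x_1,y_1)$) whose $y$-coordinate reaches $b$, use the fact that consecutive coordinates differ by at most one to pin down the crossing edge, and note that a vertical step preserves the $x$-coordinate. Your indexing of the vertex sequence is just a more explicit rendering of the paper's argument.
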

\begin{proof}
We prove only the first statement as the second one is symmetric. Suppose that $y_1<y_2$. Let $u$ be the first vertex on $P$ for which $\pi_y(u)\geq b$ when traversing it starting from the endpoint $(x_1,y_1)$, and let $v$ be the vertex encountered just before $u$ in the traversal. By our choice of $u$, we have that $\pi_y(v)<b$. Since $uv\in E(P)\subseteq E(G)$, we also have that $|\pi_y(u)-\pi_y(v)|\leq 1$, which implies that $\pi_y(v)=b-1$ and $\pi_y(u)=b$.  As $\pi_y(v)\neq\pi_y(u)$, we have that $\pi_x(v)=\pi_x(u)=a$ (say). Then $vu=(a,b-1)(a,b)\in E(P)$, as required.\hfill\qed
\end{proof}

\begin{lemma}\label{existence}
Let $H$ be a connected subgraph of $G$ such that $V(H)$ spans at least $6p+9$ rows or at least $6p+9$ columns, then $|\lambda(V(H))|\geq p+1$.
\end{lemma}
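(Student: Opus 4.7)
Proof plan.

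Without loss of generality, suppose $V(H)$ spans at least $6p+9$ columns, and write $y_{\min}=\min\pi_y(V(H))$ and $y_{\max}=\max\pi_y(V(H))$, so $y_{\max}-y_{\min}\geq 6p+8$. The plan is to exhibit $p+1$ distinct $\lambda$-colors in $V(H)$, primarily through $C\cap B$ vertices contributing colors of the form $(1,y\bmod(6p+6))$ with pairwise distinct $y$-residues modulo $6p+6$.

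I would pick a path $P$ in $H$ from a vertex at $y=y_{\min}$ to one at $y=y_{\max}$. Observation~\ref{bridge} supplies, for each $b\in[y_{\min}+1,y_{\max}]$, a vertical edge $e_b=(a_b,b-1)(a_b,b)\in E(P)\subseteq E(H)$, putting both of its endpoints in $V(H)$. A case analysis on $b\bmod 3$ combined with Observation~\ref{adjacent} then yields: when $b\equiv 0\pmod 3$, the two endpoints lie in opposite $\{R,C\}$-classes (since their $I$-images differ) but share a $\{A,B\}$-class (being in $A$ exactly when $a_b\equiv 1\pmod 3$); so whenever $a_b\not\equiv 1\pmod 3$, exactly one endpoint is in $C\cap B$. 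When $b\not\equiv 0\pmod 3$, the endpoints share an $\{R,C\}$-class but lie in different $\{A,B\}$-classes, so the unique $B$-endpoint of $e_b$ lies in $C\cap B$ iff $\lfloor a_b/3\rfloor+\lfloor b/3\rfloor$ is odd.

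For counting, I would focus on the multiples of $3$ in $[y_{\min}+1,y_{\max}]$, of which there are at least $2p+2$. Let $k$ denote the number of these $b$'s with $a_b\not\equiv 1\pmod 3$; each contributes a $C\cap B$ endpoint with $y\in\{b-1,b\}$, and distinct multiples of $3$ yield disjoint candidate $\{b-1,b\}$-sets. Because $y_{\max}-y_{\min}<2(6p+6)$, each residue class modulo $6p+6$ is represented by at most two $y$-values in $[y_{\min},y_{\max}]$, so these $k$ endpoints account for at least $\lceil k/2\rceil$ distinct colors of the form $(1,\cdot)$. Consequently, if $k\geq 2p+2$ we already have $\geq p+1$ distinct colors and are done; otherwise $C\cap B$ endpoints produced by non-multiples of $3$ (controlled by the parity of $\lfloor a_b/3\rfloor+\lfloor b/3\rfloor$) can be added to the count.

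The main obstacle is the adversarial case in which the $a_b$'s are coordinated to suppress $C\cap B$ contributions --- for instance $a_b\equiv 1\pmod 3$ at every multiple of $3$ and $\lfloor a_b/3\rfloor\equiv \lfloor b/3\rfloor\pmod 2$ elsewhere. In that regime the edges $e_b$ with $b\equiv 0\pmod 3$ have both endpoints in $A$, carrying colors $(\mu(I(a_b,b-1)),1,2)$ and $(\mu(I(a_b,b)),1,0)$. Since $\lfloor b/3\rfloor$ takes $\geq 2p+2$ distinct consecutive values along $P$ and the mod-$8p$ periodic behaviour of $\mu$ on a column (visible from the definition of $\mu$ in Section~\ref{sec:mu}) keeps the $\mu$-values distinct over so short a stretch, the two color families distinguished by the last coordinate ($0$ versus $2$) jointly supply far more than $p+1$ distinct colors. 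Making this case analysis uniform, so that it dispatches every combined pattern of $a_b\bmod 3$ and $\lfloor a_b/3\rfloor\bmod 2$ along $P$, is the technical heart of the proof.
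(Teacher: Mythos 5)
Your setup coincides with the paper's: a path $P$ between the extreme columns, Observation~\ref{bridge} supplying vertical edges $e_b=(a_b,b-1)(a_b,b)$ at multiples of $3$, and Observation~\ref{adjacent} forcing exactly one endpoint of $e_b$ into $C$. The gap is precisely in the case you flag as the ``technical heart'', namely $a_b\equiv 1\pmod 3$, where both endpoints of $e_b$ lie in $A$. The paper's resolution never counts $A$-colors at all: since $y_1+1<b<y_2$, the $C$-endpoint $(a_b,b')$ of $e_b$ is an internal vertex of $P$, so it has a second neighbour on $P$ outside $\{(a_b,b-1),(a_b,b)\}$; because $\fmod(a_b,3)=1$ and $3\mid b$, a short computation shows this neighbour has the same $I$-image (so it stays in $C$) while its $\fmod$-sum flips parity (so it lands in $B$), and its $y$-coordinate stays in $[b-2,b+1]$. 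Thus every such $b$ still contributes a $C\cap B$ vertex of $P$ near column $b$. (The paper then takes $b$ ranging over multiples of $6$ in a window of length exactly $6p+6$, which makes the resulting $y$-coordinates pairwise distinct and within $6p+5$ of one another, so their residues modulo $6p+6$ are automatically distinct --- avoiding the factor-of-two loss in your $\lceil k/2\rceil$ count.)

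Your proposed fallback for the adversarial case is not merely incomplete but unsound: $\mu$ restricted to (the $I$-image of) a single row or column takes only $O(\log p)$ distinct values, not $\Theta(p)$. This is exactly what the paper's own example $H=G[\{(0,1),\ldots,(8p,1)\}]$ with $|\mu(V(H))|\leq\lg(4p)+1$ demonstrates, and it is the very reason $\mu$ by itself is not $p$-centered. So the assertion that ``the mod-$8p$ periodic behaviour of $\mu$ on a column keeps the $\mu$-values distinct over so short a stretch'' is false, and the two families $(\mu(I(\cdot)),1,0)$ and $(\mu(I(\cdot)),1,2)$ can jointly contain only $O(\log p)$ colors along a stretch of $6p+8$ rows of a fixed column. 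The missing idea is the one-step detour along $P$ described above, which keeps the entire argument inside $C\cap B$ and makes no appeal to $\mu$.
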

\begin{proof}
We shall only give a proof for the case when $H$ spans at least $6p+9$ columns, as the  other case is symmetric.
Let $y_1=\min\pi_y(V(H))$ and $y_2=\max\pi_y(V(H))$. Then we have $y_2-y_1\geq 6p+8$. Clearly, there exist $x_1,x_2\in\mathbb{N}$ such that $(x_1,y_1),(x_2,y_2)\in V(H)$. As $H$ is a connected subgraph of $G$, there exists a path $P$ in $H$ having endvertices $(x_1,y_1)$ and $(x_2,y_2)$.

\begin{claim}
	Let $b\in\mathbb{N}$ such that $y_1+1<b<y_2$ and $3\mid b$. Then there exists $(x,y)\in V(P)\cap C\cap B$ such that $b-2\leq y \leq b+1$.
\end{claim}

The proof of this claim is obvious once it is noted that the vertices in $C\cap B\cap (\mathbb{N}\times [b-2,b+1])$ form a zig-zag separator that separates every vertex on its left from the ones on its right, as shown in Figure~\ref{fig:zigzag}. However, we give a formal proof for the sake of completeness.
\begin{figure}
	\begin{center}
		\begin{tikzpicture}
			\foreach \x in {1,...,17}
				\draw [lightgray] (-0.1,\x*.5) -- (3.1,\x*.5);
			\foreach \x in {0,...,6}
				\draw [lightgray] (\x*.5,0.4) -- (\x*.5,8.6);
			\foreach \x in {1,...,16}
			\foreach \y in {0,...,5} {
				\pgfmathtruncatemacro{\val}{mod(int(\x/3)+int(\y/3),2)};
				\pgfmathtruncatemacro{\grayshade}{20*\val};
				\draw [color=lightgray, fill=black!\grayshade] (\y*.5,\x*.5) rectangle (\y*.5+.5,\x*.5+.5);
				\pgfmathtruncatemacro{\pat}{mod(mod(\x,3)+mod(\y,3),2)};
				\ifthenelse{\pat>0}{\fill [pattern=north west lines] (\y*.5,\x*.5) rectangle (\y*.5+.5,\x*.5+.5)}{};
			}
			\node at (1.75,8.7) {$b$};
		\end{tikzpicture}\hspace{1in}
		\begin{tikzpicture}
			\foreach \x in {1,...,17}
				\draw [lightgray] (-0.1,\x*.5) -- (3.1,\x*.5);
			\foreach \x in {0,...,6}
				\draw [lightgray] (\x*.5,0.4) -- (\x*.5,8.6);
			\foreach \x in {1,...,16}
			\foreach \y in {0,...,5} {
				\pgfmathtruncatemacro{\val}{mod(int(\x/3)+int(\y/3),2)};
				\pgfmathtruncatemacro{\grayshade}{20*\val};
				\pgfmathtruncatemacro{\pat}{mod(mod(\x,3)+mod(\y,3),2)};
				\ifthenelse{\pat=0 \and \y>0 \and \y<5}{\draw [color=lightgray, fill=black!\grayshade] (\y*.5,\x*.5) rectangle (\y*.5+.5,\x*.5+.5);}{}
			}
			\node at (1.75,8.7) {$b$};
		\end{tikzpicture}
	\end{center}
	\caption{The figure on the left shows the portion of the grid in the vicinity of column $b$, where the meaning of the colours and patterns in each cell is as in Figure~\ref{fig:partition}. The figure on the right shows the same portion of the grid, but only the vertices in $C\cap B\cap (\mathbb{N}\times [b-2,b+1])$ are shaded gray.}\label{fig:zigzag}
\end{figure}
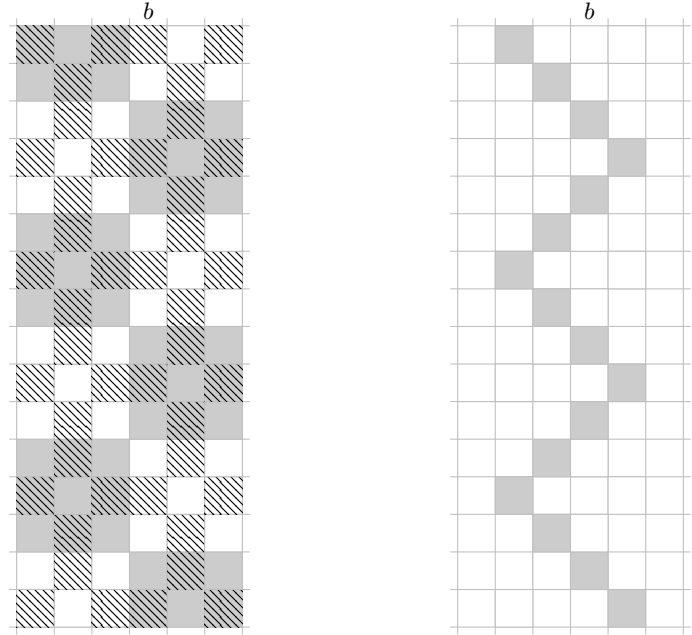

\noindent\textit{Proof of claim.} By Observation~\ref{bridge}, there exists $a\in\mathbb{N}$ such that $(a,b-1)(a,b)\in E(P)$. Using Observation~\ref{adjacent} and the fact that $3\mid b$, we have that either $(a,b-1)\in C$ or $(a,b)\in C$. Thus there exists $b'\in\{b-1,b\}$ such that $(a,b')\in C$.
%First, we consider the case when $(a,b)\in C$.
If $(a,b')\in B$, then we are done as $(a,b')$ can be chosen as $(x,y)$. So we can assume that $(a,b')\notin B$. Then $\fmod(a,3)+\fmod(b',3)$ is odd. Since $\fmod(b,3)=0$, it follows that $\fmod(b',3)$ is even. This implies that $\fmod(a,3)$ is odd, which in turn means that $\fmod(a,3)=1$. Let $(x,y)$ be the neighbour of $(a,b')$ on $P$ that does not belong to $\{(a,b),(a,b-1)\}$. As $(a,b')(x,y)\in E(G)$, we have $|a-x|+|b'-y|=1$. Thus either $a=x$ and $|b'-y|=1$, or $b'=y$ and $|a-x|=1$. Suppose that $a=x$ and $|b'-y|=1$. Then as $(x,y)\notin\{(a,b-1),(a,b)\}$, we have $y=b-2$ if $b'=b-1$ and $y=b+1$ if $b'=b$. Therefore, as $3\mid b$, we have $\left\lfloor \frac{x}{3} \right \rfloor+ \left\lfloor \frac{y}{3} \right \rfloor=\left\lfloor \frac{a}{3} \right \rfloor+ \left\lfloor \frac{b'}{3} \right \rfloor$ and $|(\fmod(x,3)+\fmod(y,3))-(\fmod(a,3)+\fmod(b',3))|=1$. Since $(a,b')\in C$, we then have $(x,y)\in C$, and since $(a,b')\notin B$, we have $(x,y)\in B$, and we are done. Next, suppose that $b'=y$ and $|a-x|=1$. Then as $\fmod(a,3)=1$, we have that $\left\lfloor \frac{x}{3} \right \rfloor+ \left\lfloor \frac{y}{3} \right \rfloor=\left\lfloor \frac{a}{3} \right \rfloor+ \left\lfloor \frac{b'}{3} \right \rfloor$  and $|(\fmod(a,3)+\fmod(b',3))-(\fmod(x,3)+\fmod(y,3))|=1$. We then again get that $(x,y)\in C\cap B$.
%Next, we consider the case when $(a,b-1)\in C$. In this case, we let $(x,y)$ be the neighbour of $(a,b-1)$ other than $(a,b)$ on the path $P$. Arguing in similar fashion, we can prove that $(x,y)\in C\cap B$.
Notice that in all cases, we find a vertex $(x,y)\in V(P)\cap C\cap B$ such that $y\in [b-2,b+1]$. This proves the claim.\medskip

Now let $Z=\{b\in [y_1+2,y_1+6p+7]\colon 6\mid b\}$. Since $y_2-y_1\geq 6p+8$, we have that $Z\subseteq [y_1+2,y_2-1]$. Then for each $b\in Z$, we know by the above claim that there exists $u_b\in V(P)\cap C\cap B$ such that $\pi_y(u_b)\in [b-2,b+1]$. Let $U=\{u_b\colon b\in Z\}$. Clearly, for distinct $b,b'\in Z$, we have $|b-b'|\geq 6$, and therefore $[b-2,b+1] \cap [b'-2,b'+1]=\emptyset$, which implies that $u_b\neq u_{b'}$. Thus $|U|=|Z|\geq\left\lfloor\frac{(y_1+6p+7)-(y_1+2)+1}{6}\right\rfloor\geq p+1$. Further, for distinct vertices $u,u'\in U$, since $\pi_y(u)\neq\pi_y(u')$ and $|\pi_y(u)-\pi_y(u')|\leq 6p+5$ , we have $\fmod(\pi_y(u),6p+6)\neq\fmod(\pi_y(u'),6p+6)$, which implies that $\lambda(u)\neq\lambda(u')$ (recall that $U\subseteq C\cap B$). Thus, $|\lambda(U)|=|U|\geq p+1$. As $U\subseteq V(P)\subseteq V(H)$, it now follows that $|\lambda(V(H))|\geq p+1$, and we are done.\hfill\qed
\end{proof}

\begin{theorem}
$\lambda$ is a $p$-centered coloring of $G$.
\end{theorem}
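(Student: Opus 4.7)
The plan is a proof by contradiction. Suppose $H$ is a violator: a connected subgraph of $G$ with $|\lambda(V(H))| \leq p$ and no vertex that is uniquely $\lambda$-colored. By the contrapositive of Lemma~\ref{existence}, $V(H)$ spans at most $6p+8$ rows and at most $6p+8$ columns. Set $T = I(V(H))$; because $I$ preserves row- and column-contiguity, $T$ is row and column contiguous, and an easy calculation bounds the number of rows and of columns it spans by $2p+4 \leq 4p$ (using $p \geq 2$).

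The first attempt --- apply the contrapositive of Lemma~\ref{small} to $T$ to obtain a uniquely $\mu$-colored $(a,b)\in T$ and then promote this to an $A$-vertex of $V(H)$ in block $(a,b)$ --- can fail, since $V(H)$ may have no $A$-vertex in that block. The fix is to work with $T^{*} = I(V(H) \cap A)$, whose very definition guarantees an $A$-vertex of $V(H)$ over every point. Granted $T^{*}$ is row and column contiguous, since $T^{*} \subseteq T$ it spans at most $4p$ rows and columns, and Lemma~\ref{small} supplies a uniquely $\mu$-colored $(a,b)\in T^{*}$. Now pick any $(x,y) \in V(H) \cap A \cap I^{-1}(a,b)$; its $\lambda$-color $(\mu(a,b), \fmod(x,3), \fmod(y,3))$ is a triple, whereas $\lambda$ assigns pairs to every vertex of $B$. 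So any $(x',y') \in V(H)$ sharing this color must lie in $A$, giving $I(x',y') \in T^{*}$ and $\mu(I(x',y')) = \mu(a,b)$; uniqueness of the $\mu$-color at $(a,b)$ inside $T^{*}$ forces $I(x',y') = (a,b)$, and the matching $\fmod$ coordinates then force $(x',y') = (x,y)$, contradicting the violator assumption.

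The main obstacle is proving row and column contiguity of $T^{*}$. For row contiguity, suppose $a_1 < a_2 < a_3$ with $a_1,a_3 \in \pi_x(T^{*})$, witnessed by $A$-vertices $u_1,u_3 \in V(H) \cap A$. Connectedness of $H$ supplies a path $P$ in $H$ from $u_1$ to $u_3$, and Observation~\ref{bridge} applied with the value $3a_2+1$ yields an edge $(3a_2, y)(3a_2+1, y)$ of $P$, for some $y$. A one-line parity check on $\fmod(y,3)$ shows that exactly one of these two endpoints lies in $A$: if $\fmod(y,3) = 1$ then $(3a_2, y) \in A$, and otherwise $(3a_2+1, y) \in A$. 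In either case the $I$-image of this $A$-vertex lies in column $a_2$, so $a_2 \in \pi_x(T^{*})$. Column contiguity is the symmetric argument.

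Finally, the degenerate case $V(H) \cap A = \emptyset$ (which would make $T^{*}$ empty) has to be disposed of separately. In this case $V(H) \subseteq B$; but every edge of $G$ inside a single block joins an $A$-vertex to a $B$-vertex, which forces the induced subgraph $G[B]$ to have only tiny components --- each center $(3a+1, 3b+1)$ is isolated in $G[B]$, while the $B$-corners form a $2$-regular subgraph whose components are $4$-cycles in the interior and isolated edges or vertices at the boundary. Hence any connected $V(H) \subseteq B$ contains at most four vertices, and a direct enumeration of the $\lambda$-colors on each such component (using Observation~\ref{adjacent} to see that the two partners of each edge lie in different sides of the $R,C$ split) shows that the vertices of $V(H)$ actually receive pairwise distinct $\lambda$-colors, so $V(H)$ is not a violator and the desired contradiction is reached.
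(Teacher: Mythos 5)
Your proposal is correct and follows essentially the same route as the paper: dispose of subgraphs spanning at least $6p+9$ rows or columns via Lemma~\ref{existence}, handle the case $V(H)\cap A=\emptyset$ separately, prove that $I(V(H)\cap A)$ is row and column contiguous, apply Lemma~\ref{small} to it, and lift the uniquely $\mu$-colored block back to a uniquely $\lambda$-colored vertex of $H$. The only (valid) local variations are that you establish contiguity of $I(V(H)\cap A)$ by applying Observation~\ref{bridge} to a path between two $A$-vertices rather than by locating a vertex with coordinate $3b+1$, and you settle the $V(H)\subseteq B$ case by describing the components of $G[B]$ explicitly rather than by bounding $|\pi_x(V(H))|$ and $|\pi_y(V(H))|$ by $2$.
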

\begin{proof}
We shall show that with respect to the coloring $\lambda$, every connected subgraph of $G$ either contains more than $p$ colors or a vertex with a unique color.
Let $H$ be any connected subgraph of $G$. Clearly, $V(H)$ is both row and column contiguous. If $V(H)$ spans either at least $6p+9$ rows or at least $6p+9$ columns, then by Lemma~\ref{existence}, we have $|\lambda(V(H))|\geq p+1$ and we are done. So let us assume that $V(H)$ spans at most $6p+8$ rows and at most $6p+8$ columns. We will show that $H$ contains a uniquely colored vertex. Since this is trivially true when $|V(H)|=1$, we shall assume that $|V(H)|>1$.

First suppose that $V(H)\cap A=\emptyset$. It should be obvious from Figure~\ref{fig:partition} and the definition of $\lambda$ that in this case, $H$ is isomorphic to a cycle on four vertices or a path on at most three vertices, and every vertex of $H$ has a different color. Nevertheless, we give a formal proof for the sake of completeness. Consider any $(x,y)\in V(H)$. Clearly, $(x,y)\in B$. Notice that if either $\fmod(x,3)=1$ or $\fmod(y,3)=1$, then $\fmod(x,3)=\fmod(y,3)=1$ and all neighbours of $(x,y)$ in $G$ belong to $A$. Since $H$ is connected, this implies that if there exists $(x,y)\in V(H)$ such that $\fmod(x,3)=1$ or $\fmod(y,3)=1$, then $|V(H)|=1$, contradicting our assumption that $|V(H)|>1$. So we assume that $V(H)\subseteq\{(x,y)\in V(G)\colon\fmod(x,3),\fmod(y,3)\in\{0,2\}\}$. Since $V(H)$ is both row and column contiguous, this implies that $|\pi_x(V(H))|,|\pi_y(V(H))|\leq 2$. Suppose that there are distinct vertices $u,v\in V(H)$ such that $\lambda(u)=\lambda(v)$. Then as $V(H)\subseteq B$, we have that either $u,v\in R$ or $u,v\in C$. Since the cases are symmetric, we assume that $u,v\in R$. Then $\lambda(u)=\lambda(v)$ implies that $\fmod(\pi_x(u),6p+6)=\fmod(\pi_x(v),6p+6)$.  As $|\pi_x(V(H))|\leq 2$, this further implies that $\pi_x(u)=\pi_x(v)$. Then since $|\pi_y(V(H))|\leq 2$ and $u\neq v$, we have $|\pi_y(u)-\pi_y(v)|=1$. Assuming without loss of generality that $\pi_y(v)>\pi_y(u)$, we have $\pi_y(v)=\pi_y(u)+1$. Recalling that $\fmod(\pi_y(u),3),\fmod(\pi_y(v),3)\in\{0,2\}$,  we then have $\pi_y(u)=3k-1$ and $\pi_y(v)=3k$, for some $k\in\mathbb{N}$. Thus $\left\lfloor\frac{\pi_y(v)}{3}\right\rfloor=\left\lfloor\frac{\pi_y(u)}{3}\right\rfloor+1$. Since $\left\lfloor\frac{\pi_x(u)}{3}\right\rfloor=\left\lfloor\frac{\pi_x(v)}{3}\right\rfloor$, we now have that $\left\lfloor\frac{\pi_x(u)}{3}\right\rfloor+\left\lfloor\frac{\pi_y(u)}{3}\right\rfloor$ and $\left\lfloor\frac{\pi_x(v)}{3}\right\rfloor+\left\lfloor\frac{\pi_y(v)}{3}\right\rfloor$ are of different parity. This contradicts the fact that $u,v\in R$. Thus, there exist no two vertices in $V(H)$ that have the same color with respect to the coloring $\lambda$, or in other words, every vertex in $H$ has a unique color, and we are done.

Next, suppose that $V(H)\cap A \neq \emptyset$.
%Since $V(H)$ spans at most $6p+8$ rows and at most $6p+8$ columns, it follows that $I(V(H)\cap A)$ spans at most $2p$ rows and $2p$ columns.
We claim that $I(V(H)\cap A)$ is row and column contiguous. This is also not difficult to see: for example, it becomes clear that $I(V(H)\cap A)$ is column contiguous if one observes that for any $i\in\mathbb{N}$ such that $i\notin\pi_y(I(V(H)\cap A))$, the connected subgraph $H$ has to lie entirely on one side of the separator $I^{-1}(\mathbb{N}\times\{i\})\cap A$ (see Figure~\ref{fig:colcont}), which implies that either $i<\min\pi_y(I(V(H)\cap A))$ or $i>\max\pi_y(I(V(H)\cap A))$.
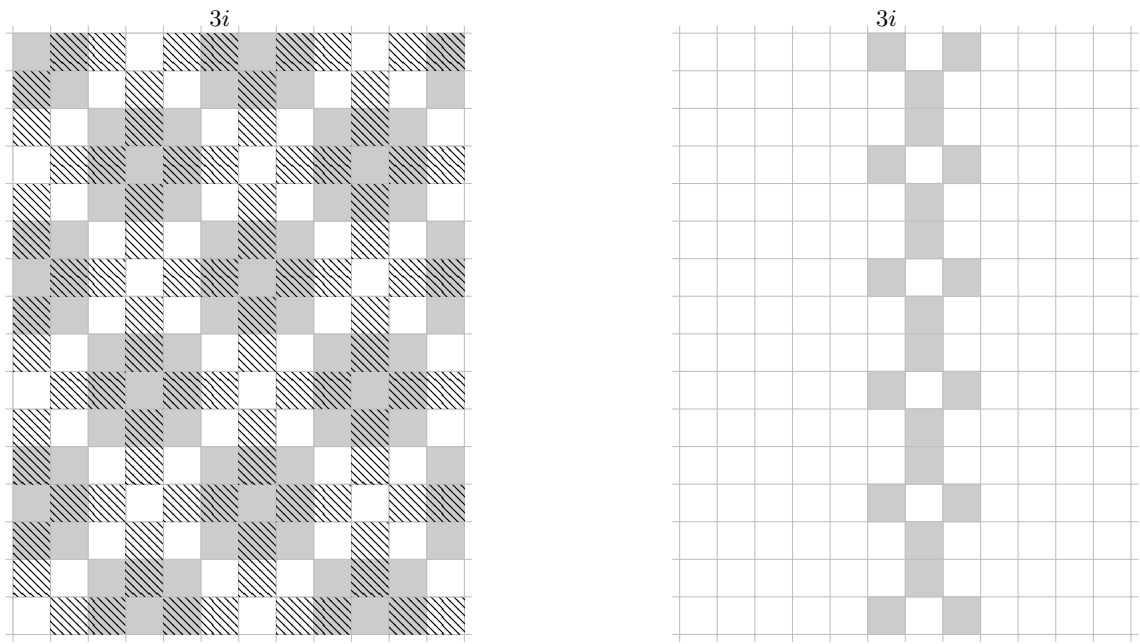
\begin{figure}
	\begin{center}
		\begin{tikzpicture}
			\foreach \x in {1,...,17}
				\draw [lightgray] (0.4,\x*.5) -- (6.6,\x*.5);
			\foreach \x in {1,...,13}
				\draw [lightgray] (\x*.5,0.4) -- (\x*.5,8.6);
			\foreach \x in {1,...,16}
			\foreach \y in {1,...,12} {
				\pgfmathtruncatemacro{\val}{mod(int(\x/3)+int(\y/3),2)};
				\pgfmathtruncatemacro{\grayshade}{20*\val};
				\draw [color=lightgray, fill=black!\grayshade] (\y*.5,\x*.5) rectangle (\y*.5+.5,\x*.5+.5);
				\pgfmathtruncatemacro{\pat}{mod(mod(\x,3)+mod(\y,3),2)};
				\ifthenelse{\pat>0}{\fill [pattern=north west lines] (\y*.5,\x*.5) rectangle (\y*.5+.5,\x*.5+.5)}{};
			}
			\node at (3.25,8.7) {$3i$};
		\end{tikzpicture}\hspace{1in}
		\begin{tikzpicture}
			\foreach \x in {1,...,17}
				\draw [lightgray] (0.4,\x*.5) -- (6.6,\x*.5);
			\foreach \x in {1,...,13}
				\draw [lightgray] (\x*.5,0.4) -- (\x*.5,8.6);
			\foreach \x in {1,...,16}
			\foreach \y in {1,...,12} {
				\pgfmathtruncatemacro{\val}{mod(int(\x/3)+int(\y/3),2)};
				\pgfmathtruncatemacro{\pat}{mod(mod(\x,3)+mod(\y,3),2)};
				\ifthenelse{\pat>0 \and \y>5 \and \y<9}{\draw [color=lightgray, fill=black!20] (\y*.5,\x*.5) rectangle (\y*.5+.5,\x*.5+.5);}{}
			}
			\node at (3.25,8.7) {$3i$};
		\end{tikzpicture}
	\end{center}
	\caption{The figure on the left shows a portion of the grid containing the column $3i$, where $i\in\mathbb{N}$; the colours and patterns in each cell depict the sets $R$, $C$, $A$ and $B$ as in Figure~\ref{fig:partition}. The figure on the right shows the same portion of the grid, but only the vertices in $I^{-1}(\mathbb{N}\times\{i\})\cap A$ are shaded gray.}\label{fig:colcont}
\end{figure}
Anyway, we give below a formal proof for the fact that $I(V(H)\cap A)$ is column contiguous.
%Let  $y_{min}=min\{y:(x,y)\in I(V(H)\cap A )\}$ and $y_{max}=max\{y:(x,y) \in I(V(H)\cap A)\}$.  Let $y_{min} < y <y_{max}$.
Consider any $y,y'\in\pi_y(I(V(H)\cap A))$ and $b\in\mathbb{N}$ such that $y<b<y'$.
Then there exists $b_1\in\{3y,3y+1,3y+2\}$ such that $b_1\in\pi_y(V(H))$ and $b_2\in\{3y',3y'+1,3y'+2\}$ such that $b_2\in\pi_y(V(H))$. Since $b_1<3b+1<b_2$ and $V(H)$ is a column contiguous set, we have that $3b+1\in\pi_y(V(H))$. Thus, there exists $a\in\mathbb{N}$ such that $(a,3b+1)\in V(H)$.
%Clearly, if $(a,b)\in I^{-1}(I(x,y))$ for some $x\in \mathbb{N}$, then $\left\lfloor \frac{b}{3} \right\rfloor=\left\lfloor \frac{y}{3} \right\rfloor$. As $H$ is connected, there is $(c,d)\in \{(s,t):\fmod(t,3)=1\}\cap(\cup_{x\in \mathbb{N}} I^{-1}(I(x,y)))\cap V(H)$.
If $(a,3b+1)\notin A$ then $\fmod(a,3)=1$, which implies that all the neighbours of $(a,3b+1)$ are in $A$. Since $(a,3b+1)\in V(H)$, $H$ is connected and $|V(H)|>1$, some neighbour $u$ of $(a,3b+1)$ is in $V(H)$. Then we have $u\in V(H)\cap A$ and $\left\lfloor\frac{\pi_y(u)}{3} \right \rfloor=b$, which implies that $b\in\pi_y(I(V(H)\cap A))$. This proves that $I(V(H)\cap A)$ is column contiguous. Symmetrically, we can prove that $I(V(H)\cap A) $ is row contiguous.
Since $V(H)$ spans at most $6p+8$ rows and at most $6p+8$ columns, it follows that $I(V(H)\cap A)$ spans at most $2p+4$ rows and at most $2p+4$ columns. As $p>1$, we have that $2p+4\leq 4p$, and therefore we can apply Lemma~\ref{small} to the set $I(V(H)\cap A)$ to conclude that there is a uniquely colored vertex $w\in I(V(H)\cap A)$ with respect to the coloring $\mu$. Since $w\in I(V(H)\cap A)$, we know that $I^{-1}(w)\cap A \cap V(H)\neq\emptyset$. Let $u\in I^{-1}(w)\cap A\cap V(H)$. Suppose for the sake of contradiction that there exists $v\in V(H)$ such that $v\neq u$ and $\lambda(v)=\lambda(u)$. Then clearly, $v\in A$ and $\mu(I(v))=\mu(I(u))=\mu(w)$. Since $w=I(u)$ is a uniquely colored vertex in $I(V(H)\cap A)$ with respect to the coloring $\mu$, and $I(v)\in I(V(H)\cap A)$, we then have that $I(v)=I(u)=w$. Now observe that from the definition of $\lambda$, no two vertices in $I^{-1}(w)\cap A$ have the same color with respect to the coloring $\lambda$. It follows that $\lambda(u)\neq\lambda(v)$, which contradicts the assumption that $\lambda(v)=\lambda(u)$. This proves that $u$ is a uniquely colored vertex in $H$ with respect to the coloring $\lambda$.
%Hence we proved that the coloring $\lambda$ is $p$-centered.
\hfill\qed
\end{proof}
\subsection{Generalizing to all values of $p$}
It is quite easy to see that the above construction can be used to get a $p$-centered coloring of $G$ for arbitrary values of $p$ as follows.
Let $p$ be any positive integer. If $p=1$, then any proper coloring of the grid is a $p$-centered coloring, and hence we have that there is a $p$-centered coloring of the grid using just 2 colors (for $p=1$, the coloring $\lambda$, if constructed overlooking the fact that the condition that $p$ has to be at least 2 is not met, is also a proper coloring of $G$ and hence is a $p$-centered coloring of $G$). So we assume that $p>1$. Then let $p'$ be the smallest power of 2 that is greater than or equal to $p$. Clearly, $p'\geq 2$ and $p'\leq 2p$. Using the construction explained above, we can get a $p'$-centered coloring $\lambda$ of $G$ using at most $140p'+12$ colors. Notice that any $p'$-centered coloring of $G$ is also a $p$-centered coloring of $G$ as $p\leq p'$. Recalling that $p'\leq 2p$, we now have that $\lambda$ is a $p$-centered coloring of $G$ using at most $140p'+12\leq 280p+12$ colors. Thus $G$ has a $p$-centered coloring using $O(p)$ colors for every positive integer $p$.
\section{Conclusion}
We hope that our strategy for constructing a $p$-centered coloring for the planar grid can serve as a starting point for research on explicit constructions of $p$-centered colorings using $O(p)$ colors for more general classes of graphs with bounded maximum degree. A natural next step could be to try if similar strategies work for bounded degree graphs that are similar to the planar grid --- for example, the planar grid with both diagonals added in each square, or the three dimensional grid.

\bibliographystyle{plain}
\bibliography{centered}
\end{document}